\IfFileExists{proc-l.cls}%
  {\documentclass{proc-l}}%
  {\documentclass{amsart}}

\usepackage{amssymb}
\usepackage[all,2cell]{xy}
\usepackage{hyperref}

\newtheorem{teo}{Theorem}[section]
\newtheorem{cor}[teo]{Corollary}
\newtheorem{lem}[teo]{Lemma}
\newtheorem{defi}[teo]{Definition}
\newtheorem{prop}[teo]{Proposition}
\newtheorem{example}[teo]{Example}
\newtheorem{remark}[teo]{Remark}

\begin{document}

\title{Torsion and exactness}

\author{Marino Gran}
\address[Marino Gran]{Institut de Recherche en Math\'ematique et Physique, Universit\'e catholique de Louvain, Chemin du Cyclotron 2, 1348 Louvain-la-Neuve, Belgique}
\email{marino.gran@uclouvain.be}
\thanks{The first author's research was supported by the Fonds de la Recherche Scientifique -- FNRS under Grant CDR No.~J.0092.26}

\author{George Janelidze}
\address[George Janelidze]{Department of Mathematics and Applied Mathematics, University of Cape Town, Rondebosch 7700, South Africa}
\email{george.janelidze@uct.ac.za}
\thanks{The second author gratefully acknowledges the financial support of UCLouvain, which made possible a research visit in July 2026 during which part of this work was carried out}


\subjclass[2020]{Primary 18E40; Secondary 18A30, 18A40}

\keywords{Non-pointed torsion theory, pretorsion theory, near-torsion theory, kernel diagram, cokernel diagram, exact sequence, reflective subcategory, coreflective subcategory}

\date{\today}

\begin{abstract}
We give an equivalent definition of a non-pointed torsion theory, also called a pretorsion theory in the literature. This uses a new, more general, notion of a near-torsion theory, and does not use non-pointed exactness. Furthermore, a general method of associating the `largest' (non-pointed) torsion theory to any given near-torsion theory is described. We also describe a near-torsion theory formed by the class of cokernel diagrams and the class of kernel diagrams in a pointed category with kernels and cokernels, and its associated torsion theory.
\end{abstract}

\maketitle

\section{Introduction}

Torsion theories were first defined and investigated in abelian
categories by Dickson in \cite{[Dic1966]}.
A torsion theory is a pair $(\mathcal T, \mathcal F)$ of full replete subcategories of an abelian category $\mathcal C$ satisfying the following properties:
\begin{itemize}
\item for any object $A \in \mathcal C$, there is a short exact sequence
\begin{equation}\label{Short} \xymatrix{0 \ar[r] & t(A)\ar[r]&A\ar[r]&f(A) \ar[r] & 0} 
\end{equation}
with $t(A) \in \mathcal T$ and $f(A) \in \mathcal F$;
\item $\mathsf{Hom}_{\mathcal C}(T,F) = \{ 0 \}$, for any $T \in \mathcal T$ and $F \in \mathcal F$.
\end{itemize}

Over the last twenty years, many
authors have studied various properties of torsion theories in
more general pointed contexts, and this has led to the discovery of many new results and examples, in the categories of topological groups \cite{[BG2006]}, commutative
rings \cite{[CDT2006]}, crossed modules \cite{[EG2013]}, and cocommutative
Hopf algebras \cite{[GKV2016]}, among others.
New characterizations of torsion and torsion-free subcategories, in terms of
their stability under extensions, were established in \cite{[JT2007]} for
general pointed categories with kernels and cokernels, pointing to some important differences with respect to the classical abelian context. More recently, even the
assumption of the existence of a zero object has been dropped, and various
approaches to \emph{non-pointed torsion theories} have been proposed (see
\cite{[FF2020], [GJ2020], [FFG2021], [X2022], [BCGT2023], [BCG2025], [CF2025],
[MM2026]} and the references therein). We refer the reader to the Introduction
of \cite{[CM2026]} for a nice overview of these recent developments of
torsion theories in pointed and non-pointed categories.

The main definition of a non-pointed torsion theory we will use in this paper is equivalent to the one of \cite{[FF2020]}, whose main properties were established in \cite{[FFG2021]}. The crucial difference with the classical notion of torsion theory in an abelian category recalled above is in replacing the zero object with the full replete subcategory $ \mathcal Z = \mathcal T \cap \mathcal F$ of \emph{trivial objects} of $\mathcal C$, and to define the ``exactness'' of the  sequence \eqref{Short} with respect to the ideal of \emph{trivial morphisms} \cite{[Ehr1964]}, which are the ones which factor through an object in $\mathcal Z$. The choice of defining kernels and cokernels with respect to a general ideal of morphisms is often referred to as 
 \emph{non-pointed exactness}, and this has played a role in a specific approach to non-abelian homological algebra \cite{[G2012]}.

Note, that the papers \cite{[FF2020],[FFG2021]} and several others, mentioned above, say ``pretorsion theory'' instead of ``non-pointed torsion theory''.

The purpose of this paper is three-fold:
\begin{itemize}
	\item [(i)] To give a definition of a non-pointed torsion theory that does not use non-pointed exactness, where, however, a new, more general, notion of a near-torsion theory is used. Every near-torsion theory $(\mathcal{T},\mathcal{F})$ in a category $\mathcal{C}$ has the `largest' triple $(\mathcal{C}_{\mathcal{T},\mathcal{F}},\mathring{\mathcal{T}},\mathring{\mathcal{F}})$ such that $\mathcal{C}_{\mathcal{T},\mathcal{F}}\subseteq\mathcal{C}$, $\mathring{\mathcal{T}}\subseteq\mathcal{T}$, $\mathring{\mathcal{F}}\subseteq\mathcal{F}$, and $(\mathring{\mathcal{T}},\mathring{\mathcal{F}})$ is a non-pointed torsion theory in $\mathcal{C}_{\mathcal{T},\mathcal{F}}$.
	\item [(ii)] To establish a non-pointed-torsion-theoretic approach to the `classical' (that is, pointed) concept of exactness. What we mean by this is to describe a near-torsion theory $(\mathcal{T},\mathcal{F})$ with $\mathcal{T}$ being the class of cokernel diagrams and $\mathcal{F}$ being the class of kernel diagrams in a given pointed category $\mathcal{A}$ with kernels and cokernels.
	\item [(iii)] Furthermore, to explain that the construction of the above-mentioned triple $(\mathcal{C}_{\mathcal{T},\mathcal{F}},\mathring{\mathcal{T}},\mathring{\mathcal{F}})$ suggests a refined definition of exactness of a sequence of morphisms in a pointed category with kernels and cokernels. 
\end{itemize}

We think this short paper will also initiate several directions of further study, as very briefly indicated in our last section.

\section{Near-torsion theories and non-pointed torsion theories}

\begin{defi}
	\emph{A pair $(\mathcal{T},\mathcal{F})$ of full replete subcategories of a category $\mathcal{C}$ will be called a} near-torsion theory \emph{in $\mathcal{C}$ if it satisfies the following conditions:}
	\begin{itemize}
		\item [(a)] \emph{$\mathcal{T}$ is a coreflective subcategory of $\mathcal{C}$, that is, the inclusion functor $\mathcal{T}\to\mathcal{C}$ has a left inverse right adjoint $t:\mathcal{C}\to\mathcal{T}$;}
		\item [(b)] \emph{$\mathcal{F}$ is a reflective subcategory of $\mathcal{C}$, that is, the inclusion functor $\mathcal{F}\to\mathcal{C}$ has a left inverse left adjoint $f:\mathcal{C}\to\mathcal{F}$;}
		\item [(c)] \emph{$A\in\mathcal{F}\Rightarrow t(A)\in\mathcal{F}$;}
		\item [(d)] \emph{$A\in\mathcal{T}\Rightarrow f(A)\in\mathcal{T}$.}
	\end{itemize}
\end{defi}
With the notation of Definition 2.1, we will sometimes write $$(\mathcal{T},\mathcal{F})=(\mathcal{C},\mathcal{T},\mathcal{F},t,\varepsilon,f,\eta),$$ where
$$(\text{Inclusion},t,\text{Identity natural transformation},\varepsilon):\mathcal{T}\to\mathcal{C},$$ $$(f,\text{Inclusion},\eta,\text{Identity natural transformation}):\mathcal{C}\to\mathcal{F}$$ are the adjunctions involved. Accordingly, we have $T\in\mathcal{T}\Leftrightarrow\varepsilon_T=1_T$ and $F\in\mathcal{F}\Leftrightarrow\eta_F=1_F.$
\begin{remark}
	\emph{As follows from 2.1(c) and 2.1(d), and idempotency of $t$ and $f$, the following conditions on an object $Z$ in $\mathcal{C}$ are equivalent:}
	\begin{itemize}
		\item [(a)] \emph{$Z\in\mathcal{T}\cap\mathcal{F}$;}
		\item [(b)] \emph{$Z=t(F)$ for some $F$ in $\mathcal{F}$;}
		\item [(c)] \emph{$Z=f(T)$ for some $T$ in $\mathcal{T}$;}
		\item [(d)] \emph{$Z=tf(A)$ for some $A$ in $\mathcal{C}$;}
		\item [(e)] \emph{$Z=ft(A)$ for some $A$ in $\mathcal{C}$.}
	\end{itemize} 
\end{remark}
\begin{lem}
	For a near-torsion theory $(\mathcal{C},\mathcal{T},\mathcal{F},t,\varepsilon,f,\eta)$ and an object $B$ in $\mathcal{C}$ such that $\varepsilon_{f(B)}:tf(B)\to f(B)$ is a monomorphism, the following conditions are equivalent:
	\begin{itemize}
		\item [(a)] $\varepsilon_B:t(B)\to B$ is a $(\mathcal{T}\cap\mathcal{F})$-kernel of $\eta_B:B\to f(B)$, that is, $\eta_B\varepsilon_B$ factors through an object in $\mathcal{T}\cap\mathcal{F}$, and, for every morphism $\alpha:A\to B$ such that $\eta_B\alpha$ factors through an object in $\mathcal{T}\cap\mathcal{F}$, there exists a unique morphism $\alpha':A\to t(B)$ with $\varepsilon_B\alpha'=\alpha$;
		\item [(b)] $\varepsilon_B$ is a monomorphism and a morphism $\alpha:A\to B$ factors through an object in $\mathcal{T}$ whenever the composite $\eta_B\alpha$ does;
		\item [(c)] the diagram $$\xymatrix{t(B)\ar[d]_{\varepsilon_B}\ar[r]^-{t(\eta_B)}&tf(B)\ar[d]^{\varepsilon_{f(B)}}\\B\ar[r]_-{\eta_B}&f(B)}$$ is a pullback. 
	\end{itemize}
\end{lem}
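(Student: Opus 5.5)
The whole proof rests on one observation about factorizations, which I will establish first. Let $g:X\to Y$ be any morphism. If $g$ factors as $g=hk$ with $k:X\to T$, $h:T\to Y$ and $T\in\mathcal{T}$, then the universal property of the coreflection gives $h=\varepsilon_Y t(h)$. Hence $g=\varepsilon_Y\,(t(h)k)$. So $g$ factors through an object of $\mathcal{T}$ if and only if it factors through $\varepsilon_Y$; the converse holds because $t(Y)\in\mathcal{T}$.

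Now take $Y=f(B)$. By Remark 2.2, $tf(B)\in\mathcal{T}\cap\mathcal{F}$. Therefore, for a morphism $\gamma:A\to f(B)$, the following three statements are equivalent: $\gamma$ factors through an object of $\mathcal{T}\cap\mathcal{F}$; $\gamma$ factors through an object of $\mathcal{T}$; $\gamma$ factors through $\varepsilon_{f(B)}$. Moreover, since $\varepsilon_{f(B)}$ is assumed to be a monomorphism, such a factorization $\gamma=\varepsilon_{f(B)}\beta$ is unique. I will apply this with $\gamma=\eta_B\alpha$ throughout, and I expect this to be the only real content; the rest is routine diagram chasing. The square in (c) commutes in every case, by naturality of $\varepsilon$: $\eta_B\varepsilon_B=\varepsilon_{f(B)}t(\eta_B)$.

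(c)$\Rightarrow$(a). The commuting square shows that $\eta_B\varepsilon_B$ factors through $tf(B)\in\mathcal{T}\cap\mathcal{F}$. Let $\alpha:A\to B$ be such that $\eta_B\alpha$ factors through an object of $\mathcal{T}\cap\mathcal{F}$. By the observation, $\eta_B\alpha=\varepsilon_{f(B)}\beta$ for some $\beta$, and the pullback property yields $\alpha'$ with $\varepsilon_B\alpha'=\alpha$. For uniqueness, let $\alpha''$ be any morphism with $\varepsilon_B\alpha''=\alpha$. Then $\varepsilon_{f(B)}t(\eta_B)\alpha''=\eta_B\alpha$, so $t(\eta_B)\alpha''=\beta$ because $\varepsilon_{f(B)}$ is monic. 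The uniqueness clause of the pullback then gives $\alpha''=\alpha'$.

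(a)$\Rightarrow$(b). The uniqueness part of (a) applies to every pair $x,y$ with $\varepsilon_Bx=\varepsilon_By$: for $\alpha=\varepsilon_Bx$, the composite $\eta_B\alpha$ factors through $tf(B)$ by the commuting square. Uniqueness then forces $x=y$, so $\varepsilon_B$ is a monomorphism. Next, suppose $\eta_B\alpha$ factors through an object of $\mathcal{T}$. By the observation it factors through an object of $\mathcal{T}\cap\mathcal{F}$, so (a) gives $\alpha=\varepsilon_B\alpha'$. Thus $\alpha$ factors through $t(B)\in\mathcal{T}$.

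(b)$\Rightarrow$(c). Let $u:A\to B$ and $v:A\to tf(B)$ satisfy $\eta_Bu=\varepsilon_{f(B)}v$. Then $\eta_Bu$ factors through an object of $\mathcal{T}$, so by (b) $u$ does too. By the observation (with $Y=B$) we can write $u=\varepsilon_Bu'$. We then compute
\[\varepsilon_{f(B)}t(\eta_B)u'=\eta_B\varepsilon_Bu'=\eta_Bu=\varepsilon_{f(B)}v,\]
and since $\varepsilon_{f(B)}$ is monic, $t(\eta_B)u'=v$. Uniqueness of $u'$ follows because $\varepsilon_B$ is monic. Hence the square is a pullback.
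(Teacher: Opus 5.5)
Your proof is correct and uses the same ingredients as the paper's. These are the coreflection fact that any morphism from an object of $\mathcal{T}$ into $Y$ factors through $\varepsilon_Y$ (applied with $Y=f(B)$, where $tf(B)\in\mathcal{T}\cap\mathcal{F}$), and the monomorphism $\varepsilon_{f(B)}$, which lets you pass between such factorizations and pullback cones. The only difference is organizational: you isolate that factorization fact up front and prove the cycle (c)$\Rightarrow$(a)$\Rightarrow$(b)$\Rightarrow$(c), whereas the paper proves (a)$\Leftrightarrow$(b) and (a)$\Leftrightarrow$(c) separately.
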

\begin{proof} 	
	(a)$\Rightarrow$(b): Suppose (a) holds, and consider the commutative diagram $$\xymatrix{&A\ar@{.>}[dl]\ar[d]^\alpha\ar[r]^-\gamma&T\ar[d]^\beta\\t(B)\ar[r]_-{\varepsilon_B}&B\ar[r]_-{\eta_B}&f(B)}$$  
	of solid arrows, in which $\eta_B\alpha=\beta\gamma$ represents a factorization of $\eta_B\alpha$ through an object $T$ of $\mathcal{T}$. Since $T$ belongs to $\mathcal{T}$ and $f(B)$ belongs to $\mathcal{F}$, the morphism $\beta:T\to f(B)$ factors through $tf(B)$, which belongs to $\mathcal{T}\cap\mathcal{F}$. Therefore $\eta_B\alpha=\beta\gamma$ also factors through an object of $\mathcal{T}\cap\mathcal{F}$. This gives the dotted arrow showing that $\alpha$ factors through an object in $\mathcal T$. The fact that $\varepsilon_B$ is a monomorphism follows from the uniqueness requirement for $\alpha'$ in $(a)$.
	
	(b)$\Rightarrow$(a): The morphism $\eta_B \varepsilon_B$ factors through $tf(B)$, which belongs to $\mathcal T \cap \mathcal F$ by definition of near-torsion theory. Suppose then that (b) holds, and consider the commutative diagram $$\xymatrix{T\ar@{.>}[d]\ar[dr]|{\beta'}&A\ar[l]_-{\gamma'}\ar[d]^\alpha\ar[r]^-\gamma&Z\ar[d]^\beta\\t(B)\ar[r]_-{\varepsilon_B}&B\ar[r]_-{\eta_B}&f(B)}$$  
	of solid arrows, in which $\eta_B\alpha=\beta\gamma$ represents a factorization of $\eta_B\alpha$ through an object $Z$ of $\mathcal{T}\cap\mathcal{F}\subseteq\mathcal{T}$, and $\alpha=\beta'\gamma'$ represents a factorization of $\alpha$ through an object in $\mathcal{T}$ (which exists by (b)). Since $T$ belongs to $\mathcal{T}$ this gives us the dotted arrow that keeps the diagram commutative. To prove (a) we take the desired $\alpha':A\to t(B)$ to be the composite of the dotted arrow with $\gamma'$. The uniqueness of $\alpha'$ follows from the fact that $\varepsilon_B$ is a monomorphism.
	
	(a)$\Leftrightarrow$(c): Just consider the diagram  $$\xymatrix{A\ar@/^-0.5cm/[ddr]_{\alpha}\ar@{.>}[dr]\ar@/^0.5cm/[drr]^{\alpha'}\\&t(B)\ar[d]_{\varepsilon_B}\ar[r]^-{t(\eta_B)}&tf(B)\ar[d]^{\varepsilon_{f(B)}}\\&B\ar[r]_-{\eta_B}&f(B)}$$  
	and note that, since $\varepsilon_{f(B)}$ is a monomorphism, its upper triangle commutes whenever the lower triangle and the square do.   
\end{proof}
Dually, we obtain:
\begin{lem}
	For a near-torsion theory $(\mathcal{C},\mathcal{T},\mathcal{F},t,\varepsilon,f,\eta)$ and an object $B$ in $\mathcal{C}$ such that $\eta_{t(B)}:t(B)\to ft(B)$ is an epimorphism, the following conditions are equivalent:
	\begin{itemize}
		\item [(a)] $\eta_B:B\to f(B)$ is a $(\mathcal{T}\cap\mathcal{F})$-cokernel of $\varepsilon_B:t(B)\to B$, that is, $\eta_B\varepsilon_B$ factors through an object in $\mathcal{T}\cap\mathcal{F}$, and, for every morphism $\beta:B\to C$ such that $\beta\varepsilon_B$ factors through an object in $\mathcal{T}\cap\mathcal{F}$, there exists a unique morphism $\beta':f(B)\to C$ with $\beta'\eta_B=\beta$;
		\item [(b)] $\eta_B$ is an epimorphism and a morphism $\beta:B\to C$ factors through an object in $\mathcal{F}$ whenever the composite $\beta\varepsilon_B$ does;
		\item [(c)] the diagram $$\xymatrix{t(B)\ar[d]_{\varepsilon_B}\ar[r]^-{\eta_{t(B)}}&ft(B)\ar[d]^{f(\varepsilon_B)}\\B\ar[r]_-{\eta_B}&f(B)}$$ is a pushout.\qed 
	\end{itemize}
\end{lem}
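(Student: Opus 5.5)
The plan is to deduce Lemma 2.4 from Lemma 2.3 by duality. Passing to the opposite category, the pair $(\mathcal{F}^{op},\mathcal{T}^{op})$ is a near-torsion theory in $\mathcal{C}^{op}$. Indeed, a reflective subcategory of $\mathcal{C}$ becomes coreflective in $\mathcal{C}^{op}$ and vice versa, so conditions 2.1(a) and 2.1(b) are interchanged. Conditions 2.1(c) and 2.1(d) are interchanged as well. In this dual near-torsion theory the roles are swapped as follows: the coreflector is $f^{op}$ with counit $\eta^{op}$, and the reflector is $t^{op}$ with unit $\varepsilon^{op}$. The class of trivial objects $\mathcal{F}^{op}\cap\mathcal{T}^{op}$ is the same class of objects $\mathcal{T}\cap\mathcal{F}$, and a morphism factors through such an object in $\mathcal{C}^{op}$ exactly when it does in $\mathcal{C}$.

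I would then apply Lemma 2.3 to this dual theory and the same object $B$. Its hypothesis asks that the counit at the reflection of $B$ be a monomorphism in $\mathcal{C}^{op}$. Here the reflection of $B$ is $t(B)$ and that counit is $\eta^{op}_{t(B)}$. So the hypothesis says exactly that $\eta_{t(B)}:t(B)\to ft(B)$ is an epimorphism in $\mathcal{C}$, which is our assumption.

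Next I would translate the three conditions back to $\mathcal{C}$.
\begin{itemize}
\item Condition 2.3(a) in $\mathcal{C}^{op}$ says that $\eta_B^{op}$ is a trivial kernel of $\varepsilon_B^{op}$. Reversing arrows, this is the statement that $\eta_B$ is a $(\mathcal{T}\cap\mathcal{F})$-cokernel of $\varepsilon_B$, that is, (a). Test morphisms $\alpha:A\to B$ in $\mathcal{C}^{op}$ become $\beta:B\to C$ in $\mathcal{C}$, and composites $\eta_B\alpha$ become $\beta\varepsilon_B$.
\item Condition 2.3(b) becomes (b): monomorphisms turn into epimorphisms, and ``factors through an object of $\mathcal{T}$'' in the dual theory means ``factors through an object of $\mathcal{F}$'' in $\mathcal{C}$.
\item Condition 2.3(c) becomes (c). The pullback square in $\mathcal{C}^{op}$ is the square formed by $\eta_B$, $\varepsilon_B$, $\eta_{t(B)}$ and $f(\varepsilon_B)$, where $f(\varepsilon_B)$ is the image under the dual coreflector $f^{op}$ of the dual unit $\varepsilon_B^{op}$. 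A pullback in $\mathcal{C}^{op}$ is a pushout in $\mathcal{C}$.
\end{itemize}

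The only step needing care is this bookkeeping. One must check that the dual square is indeed the pushout square displayed in (c), including that it commutes. Commutativity holds by naturality of $\eta$ applied to $\varepsilon_B$: $f(\varepsilon_B)\,\eta_{t(B)}=\eta_B\,\varepsilon_B$.

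If one prefers a direct argument instead of duality, the steps of the proof of Lemma 2.3 dualize verbatim:
\begin{itemize}
\item For (a)$\Rightarrow$(b), a factorization of $\beta\varepsilon_B$ through some $F\in\mathcal{F}$ passes through $ft(B)\in\mathcal{T}\cap\mathcal{F}$, since any morphism from $t(B)\in\mathcal{T}$ to $F$ factors through $\eta_{t(B)}$.
\item For (b)$\Rightarrow$(a), one uses the universal property of $\eta_B$ with respect to objects of $\mathcal{F}$, and uniqueness comes from $\eta_B$ being epic.
\item For (a)$\Leftrightarrow$(c), one uses that $\eta_{t(B)}$ is epic, so that one triangle of the pushout comparison commutes automatically.
\end{itemize}
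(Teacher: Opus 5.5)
Your proposal is correct and matches the paper, which obtains this lemma simply by dualizing Lemma 2.3. Your bookkeeping of the dual near-torsion theory $(\mathcal{C}^{op},\mathcal{F}^{op},\mathcal{T}^{op},f^{op},\eta^{op},t^{op},\varepsilon^{op})$, including the check that the square commutes by naturality of $\eta$ at $\varepsilon_B$, spells out what the paper's ``Dually, we obtain'' leaves implicit.
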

\begin{defi}
	\emph{A near-torsion theory $(\mathcal{T},\mathcal{F})=(\mathcal{C},\mathcal{T},\mathcal{F},t,\varepsilon,f,\eta)$ is said to be a} (non-pointed) torsion theory \emph{in $\mathcal{C}$ if, for each object $B$ in $\mathcal{C}$, all the conditions of Lemmas 2.3 and 2.4 hold, or, equivalently:}
	\begin{itemize}
		\item [(a)] \emph{$\varepsilon_{f(B)}:tf(B)\to f(B)$ is a monomorphism and the equivalent conditions of Lemma 2.3 hold;}
		\item [(b)] \emph{$\eta_{t(B)}:t(B)\to ft(B)$ is an epimorphism and the equivalent conditions of Lemma 2.4 hold.}
	\end{itemize}
\end{defi}
\begin{remark}
	\emph{As conditions 2.3(a) and 2.4(a) easily show, what we call a non-pointed torsion theory is the same as what is called a pretorsion theory, originally in \cite{[FF2020]} and then in \cite{[FFG2021]}, as well as in some more recent papers (see \cite{[BCG2025], [BCGT2023], [CF2025], [MM2026], [X2022], [FF2026]} for instance, and the references therein).} 
\end{remark}
\begin{teo}\label{largest}
	For a near-torsion theory $(\mathcal{T},\mathcal{F})=(\mathcal{C},\mathcal{T},\mathcal{F},t,\varepsilon,f,\eta)$, let $\mathring{\mathcal{T}}$, $\mathring{\mathcal{F}}$, and $\mathcal{C}_{\mathcal{T},\mathcal{F}}$ be the full subcategories of $\mathcal{T}$, $\mathcal{F}$ and $\mathcal{C}$, respectively, defined as follows:
	\begin{itemize}
		\item [(a)] $T\in\mathring{\mathcal{T}}$ if and only if $T\in\mathcal{T}$ and $\eta_T$ is an epimorphism;
		\item [(b)] $F\in\mathring{\mathcal{F}}$ if and only if $F\in\mathcal{F}$ and $\varepsilon_F$ is a monomorphism;
		\item [(c)] $B\in\mathcal{C}_{\mathcal{T},\mathcal{F}}$ if and only if $t(B)\in\mathring{\mathcal{T}}$, $f(B)\in\mathring{\mathcal{F}}$, the diagram given in 2.3(c) is a pullback, and the diagram given in 2.4(c) is a pushout. 
	\end{itemize}
Then $(\mathring{\mathcal{T}},\mathring{\mathcal{F}})$ is a torsion theory (that is, a pretorsion theory in the sense of \cite{[FF2020], [FFG2021]}) in $\mathcal{C}_{\mathcal{T},\mathcal{F}}$.
Moreover $\mathring{\mathcal{T}}=\mathcal{T}\cap\mathcal{C}_{\mathcal{T},\mathcal{F}}$,
$\mathring{\mathcal{F}}=\mathcal{F}\cap\mathcal{C}_{\mathcal{T},\mathcal{F}}$, and
$\mathcal{C}_{\mathcal{T},\mathcal{F}}$ is the largest full subcategory
$\mathcal{C}'$ of $\mathcal{C}$ such that every object $B$ of $\mathcal{C}'$
satisfies the conditions of Definition 2.5, with monomorphisms, epimorphisms, pullbacks, and pushouts involved being as in $\mathcal{C}$.
\end{teo}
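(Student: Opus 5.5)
The plan is to check the claims in order: first that $\mathring{\mathcal{T}}$ and $\mathring{\mathcal{F}}$ lie in $\mathcal{C}_{\mathcal{T},\mathcal{F}}$; then that the adjunctions restrict to give a near-torsion theory there; then the conditions of Definition 2.5; and finally maximality.

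\emph{Step 1.} I will show $\mathring{\mathcal{T}}\subseteq\mathcal{C}_{\mathcal{T},\mathcal{F}}$. Take $T\in\mathring{\mathcal{T}}$. Then $t(T)=T\in\mathring{\mathcal{T}}$ and $\varepsilon_T=1_T$.
\begin{itemize}
\item By 2.1(d), $f(T)\in\mathcal{T}\cap\mathcal{F}$, so $\varepsilon_{f(T)}=1$. Hence $f(T)\in\mathring{\mathcal{F}}$.
\item In the square of 2.3(c), both vertical arrows are identities, so it is trivially a pullback.
\item In the square of 2.4(c), $f(\varepsilon_T)=1$ and $\varepsilon_T=1$, so it is trivially a pushout.
\end{itemize}
Dually, $\mathring{\mathcal{F}}\subseteq\mathcal{C}_{\mathcal{T},\mathcal{F}}$. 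Conversely, if $T\in\mathcal{T}\cap\mathcal{C}_{\mathcal{T},\mathcal{F}}$, then $T=t(T)\in\mathring{\mathcal{T}}$. This proves $\mathring{\mathcal{T}}=\mathcal{T}\cap\mathcal{C}_{\mathcal{T},\mathcal{F}}$, and dually $\mathring{\mathcal{F}}=\mathcal{F}\cap\mathcal{C}_{\mathcal{T},\mathcal{F}}$.

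We also record that every $Z\in\mathcal{T}\cap\mathcal{F}$ has $\eta_Z=1$ and $\varepsilon_Z=1$. So $\mathcal{T}\cap\mathcal{F}\subseteq\mathring{\mathcal{T}}\cap\mathring{\mathcal{F}}\subseteq\mathcal{C}_{\mathcal{T},\mathcal{F}}$.

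\emph{Step 2.} I will show that $(\mathring{\mathcal{T}},\mathring{\mathcal{F}})$ is a near-torsion theory in $\mathcal{C}_{\mathcal{T},\mathcal{F}}$, using the same $t,\varepsilon,f,\eta$.
\begin{itemize}
\item For $B\in\mathcal{C}_{\mathcal{T},\mathcal{F}}$ we have $t(B)\in\mathring{\mathcal{T}}$ and $f(B)\in\mathring{\mathcal{F}}$ by definition. Since all subcategories are full, the universal properties of $\varepsilon_B$ and $\eta_B$ restrict. Hence $t$ and $f$ restrict to a coreflection onto $\mathring{\mathcal{T}}$ and a reflection onto $\mathring{\mathcal{F}}$.
\item For 2.1(c): if $A\in\mathring{\mathcal{F}}$, then $t(A)\in\mathcal{T}\cap\mathcal{F}$, which lies in $\mathring{\mathcal{F}}$ by Step 1. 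Condition 2.1(d) is dual.
\item Repleteness is clear, since monos, epis and the defining squares are invariant under isomorphism.
\end{itemize}

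\emph{Step 3.} I will verify Definition 2.5 for each $B\in\mathcal{C}_{\mathcal{T},\mathcal{F}}$, using the formulations 2.3(c) and 2.4(c).
\begin{itemize}
\item $\varepsilon_{f(B)}$ is a monomorphism in $\mathcal{C}$ because $f(B)\in\mathring{\mathcal{F}}$. It stays a monomorphism in the full subcategory.
\item The pullback square of 2.3(c) has all four vertices $t(B)$, $tf(B)$, $B$, $f(B)$ in $\mathcal{C}_{\mathcal{T},\mathcal{F}}$, using Step 1 for $tf(B)$. A pullback in $\mathcal{C}$ whose vertices lie in a full subcategory is a pullback there.
\item The epimorphism $\eta_{t(B)}$ and the pushout of 2.4(c) are handled dually.
\end{itemize}
By Lemmas 2.3 and 2.4, now applied in $\mathcal{C}_{\mathcal{T},\mathcal{F}}$ with trivial objects $\mathring{\mathcal{T}}\cap\mathring{\mathcal{F}}=\mathcal{T}\cap\mathcal{F}$, all the required conditions hold.

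\emph{Step 4.} For maximality, let $\mathcal{C}'$ be a full subcategory each of whose objects $B$ satisfies Definition 2.5 in $\mathcal{C}$. Then:
\begin{itemize}
\item $\varepsilon_{f(B)}$ is mono, so $f(B)\in\mathring{\mathcal{F}}$;
\item $\eta_{t(B)}$ is epi, so $t(B)\in\mathring{\mathcal{T}}$;
\item the two squares are a pullback and a pushout by 2.3(c) and 2.4(c).
\end{itemize}
Hence $B\in\mathcal{C}_{\mathcal{T},\mathcal{F}}$. Together with Step 3, which shows that $\mathcal{C}_{\mathcal{T},\mathcal{F}}$ itself is such a subcategory, this proves it is the largest.

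The main obstacle is bookkeeping. One must check that the limits and colimits, and the units and counits, computed in $\mathcal{C}$ really stay inside $\mathcal{C}_{\mathcal{T},\mathcal{F}}$. The key point making this work is the inclusion $\mathcal{T}\cap\mathcal{F}\subseteq\mathcal{C}_{\mathcal{T},\mathcal{F}}$ from Step 1.
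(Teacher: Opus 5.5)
Your proof is correct and follows the paper's route. The paper likewise reduces everything, via Lemmas 2.3 and 2.4, to the inclusions $\mathring{\mathcal{T}},\mathring{\mathcal{F}}\subseteq\mathcal{C}_{\mathcal{T},\mathcal{F}}$, shown by noting that for $B\in\mathcal{T}$ both squares have identity vertical sides, and it reads maximality directly off definition (c); your Steps 2--3 just make explicit the restriction of the adjunctions, monomorphisms and pullbacks to the full subcategory, which the paper leaves implicit.
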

\begin{proof}
	Thanks to Lemmas 2.3 and 2.4, all we need to show are the inclusions $\mathring{\mathcal{T}}\subseteq\mathcal{C}_{\mathcal{T},\mathcal{F}}$ and ${\mathring{\mathcal{F}}\subseteq\mathcal{C}_{\mathcal{T},\mathcal{F}}}$. The first of them is obvious since for $B\in\mathcal{T}$ each of our two diagrams becomes $$\xymatrix{B\ar@{=}[d]\ar[r]^-{\eta_B}&f(B)\ar@{=}[d]\\B\ar[r]_-{\eta_B}&f(B)}$$
	and the second inclusion follows by duality. The maximality assertion is immediate, since by (c) the objects of
$\mathcal{C}_{\mathcal{T},\mathcal{F}}$ are precisely those satisfying these
conditions.
\end{proof}

\section{The near-torsion theory formed by cokernel and kernel diagrams}

In this section we consider the near-torsion theory $(\mathcal{T},\mathcal{F})=(\mathcal{C},\mathcal{T},\mathcal{F},t,\varepsilon,f,\eta)$ associated with a pointed category $\mathcal{A}$ with kernels and cokernels as follows:
\begin{itemize}
	\item [-] $\mathcal{C}$ is the category of composable pairs $A=$ $$\xymatrix{A_0\ar[r]^-{d_0^A}&A_1\ar[r]^-{d_1^A}&A_2,}$$ where we will usually write $d_i^A=d_i$ $(i=0,1)$, and having $d_1d_0=0$.
	\item [-] $\mathcal{T}$ consists of all cokernel diagrams in $\mathcal{A}$, that is, $A\in\mathcal{T}\Leftrightarrow d^A_1=\mathrm{coker}(d^A_0)$.
	\item [-] $\mathcal{F}$ consists of all kernel diagrams in $\mathcal{A}$, that is, $A\in\mathcal{F} \Leftrightarrow d^A_0=\mathrm{ker}(d^A_1)$.
	\item [-] Accordingly,  the diagram $$\xymatrix{t(A)\ar[r]^-{\varepsilon_A}&A\ar[r]^-{\eta_A}&f(A)}$$ becomes the commutative diagram
	$$\xymatrix{A_0\ar[d]_{d_0}\ar@{=}[r]&A_0\ar[d]^{d_0}\ar[r]^-{d'_0}&\mathrm{Ker}(d_1)\ar[d]^{\mathrm{ker}(d_1)}\\A_1\ar[d]_{\mathrm{coker}(d_0)}\ar@{=}[r]&A_1\ar[d]^{d_1}\ar@{=}[r]&A_1\ar[d]^{d_1}\\\mathrm{Coker}(d_0)\ar[r]_-{d'_1}&A_2\ar@{=}[r]&A_2}$$ where $d'_0$ and $d'_1$ are determined by the suitable universal properties. 
\end{itemize}
Let us describe $\mathring{\mathcal{T}}$, $\mathring{\mathcal{F}}$, and $\mathcal{C}_{\mathcal{T},\mathcal{F}}$ in this case. For $F\in\mathcal{F}$ our display of $\varepsilon_F:t(F)\to F$ is $$\xymatrix{F_0\ar[d]_{d_0}\ar@{=}[r]&F_0\ar[d]^{d_0}\\F_1\ar[d]_{\mathrm{coker}(d_0)}\ar@{=}[r]&F_1\ar[d]^{d_1}\\\mathrm{Coker}(d_0)\ar[r]_-{d'_1}&F_2}$$ where $d_0$ is a kernel of $d_1$. Here $\varepsilon_F$ is a monomorphism if and only if $d_1=d_1^F$ admits a (normal epi, mono)-factorization. From this and the dual argument, we conclude:
\begin{itemize}
	\item [-] an object $T$ in $\mathcal{T}$ belongs to $\mathring{\mathcal{T}}$ if and only if $d_0^T$ admits an (epi, normal mono)-factorization;
	\item [-] an object $F$ in $\mathcal{F}$ belongs to $\mathring{\mathcal{F}}$ if and only if $d_1^F$ admits a (normal epi, mono)-factorization.
\end{itemize}

Next, for an object $B$ in $\mathcal{C}$, the diagram given in 2.3(c) becomes $$\xymatrix{B_0\ar[ddd]_{d_0}\ar@{=}[dr]\ar[rrr]&&&\mathrm{Ker}(d_1)\ar[ddd]^{\mathrm{ker}(d_1)}\ar@{=}[dr]\\&B_0\ar[ddd]_{d_0}\ar[rrr]^-{d'_0}&&&\mathrm{Ker}(d_1)\ar[ddd]^{\mathrm{ker}(d_1)}\\\\B_1\ar[ddd]_{\mathrm{coker}(d_0)}\ar@{=}[dr]\ar@{=}[rrr]&&&B_1\ar[ddd]^{\mathrm{coker}(\mathrm{ker}(d_1))}\ar@{=}[dr]\\&B_1\ar[ddd]_{d_1}\ar@{=}[rrr]&&&B_1\ar[ddd]^{d_1}\\\\\mathrm{Coker}(d_0)\ar[dr]\ar[rrr]&&&\mathrm{Coker}(\mathrm{ker}(d_1))\ar[dr]\\&B_2\ar@{=}[rrr]&&&B_2}$$ where the unlabeled arrows are determined by the suitable universal properties. This shows us that $B$ satisfies condition 2.3(c) if and only if the canonical morphism $\mathrm{Coker}(d_0)\to\mathrm{Coker}(\mathrm{ker}(d_1))$ is an isomorphism. Dually, $B$ satisfies 2.4(c) if and only if the canonical morphism $\mathrm{Ker}(\mathrm{coker}(d_0))\to\mathrm{Ker}(d_1)$ is an isomorphism. However, these two conditions are trivially equivalent to each other, and we obtain:
\begin{teo}
	An object $B$ in $\mathcal{C}$ belongs to $\mathcal{C}_{\mathcal{T},\mathcal{F}}$ if and only if it satisfies the following conditions:
	\begin{itemize}
		\item [(a)] the morphism $d_0^B$ admits an (epi, normal mono)-factorization;
		\item [(b)] the morphism $d_1^B$ admits a (normal epi, mono)-factorization;
		\item [(c)] the morphisms $d_0^B$ and $\mathrm{ker}(d_1^B)$ have the same cokernels, or, equivalently, the morphisms $d_1^B$ and $\mathrm{coker}(d_0^B)$ have the same kernels.\qed
	\end{itemize}
\end{teo}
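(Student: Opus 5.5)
The plan is to unwind Theorem 2.7(c), the definition of $\mathcal{C}_{\mathcal{T},\mathcal{F}}$, into its four requirements. These are $t(B)\in\mathring{\mathcal{T}}$, $f(B)\in\mathring{\mathcal{F}}$, the square of 2.3(c) being a pullback, and the square of 2.4(c) being a pushout. We then translate each requirement using the computations already made in this section.

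\textbf{Step 1: the membership conditions.} Here $t(B)$ is the cokernel diagram $B_0\to B_1\to\mathrm{Coker}(d_0)$, whose first morphism is $d_0^B$ itself. By the description of $\mathring{\mathcal{T}}$ obtained above, $t(B)\in\mathring{\mathcal{T}}$ if and only if $d_0^B$ admits an (epi, normal mono)-factorization, which is (a). Dually, $f(B)$ is the kernel diagram $\mathrm{Ker}(d_1)\to B_1\to B_2$, whose second morphism is $d_1^B$. Hence $f(B)\in\mathring{\mathcal{F}}$ if and only if $d_1^B$ admits a (normal epi, mono)-factorization, which is (b).

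\textbf{Step 2: the pullback condition.} Pullbacks in $\mathcal{C}$ are computed componentwise in $\mathcal{A}$, provided the componentwise pullback is again an object of $\mathcal{C}$, which is clear here. In the displayed cube, the top face is a pullback because its left and right vertical arrows are identities on $B_0$ and $\mathrm{Ker}(d_1)$. The middle face consists of identities on $B_1$. So the square is a pullback exactly when the bottom face is a pullback in $\mathcal{A}$. That bottom face has identity on $B_2$ as its right side, so it is a pullback iff the canonical map $\mathrm{Coker}(d_0)\to\mathrm{Coker}(\mathrm{ker}(d_1))$ is an isomorphism. This says precisely that $d_0$ and $\mathrm{ker}(d_1)$ have the same cokernels. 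Dually, the pushout condition holds iff $\mathrm{Ker}(\mathrm{coker}(d_0))\to\mathrm{Ker}(d_1)$ is an isomorphism, i.e.\ $d_1$ and $\mathrm{coker}(d_0)$ have the same kernels.

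\textbf{Step 3: the two conditions are equivalent.} This is the only non-bookkeeping point, though it is routine. Set $k=\mathrm{ker}(d_1)$ and $c=\mathrm{coker}(d_0)$. Since $d_0$ factors through $k$, every morphism killing $k$ kills $d_0$; the question is the converse. For any morphism $x$, I will show both conditions are equivalent to the same statement:
\[
x d_0=0 \;\Longrightarrow\; x k=0.
\]
\emph{Same cokernels.} If $\mathrm{coker}(d_0)\cong\mathrm{coker}(k)$ canonically, then $xd_0=0$ makes $x$ factor through $c$, hence through $\mathrm{coker}(k)$, so $xk=0$. Conversely, if the implication holds, then $c$ kills $k$, and so $c$ has the universal property of $\mathrm{coker}(k)$.

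\emph{Same kernels.} The statement ``$k$ factors through $\mathrm{ker}(c)$'' is equivalent to $ck=0$. Taking $x=c$, this is an instance of the implication. Conversely, if $ck=0$ and $xd_0=0$, then $x=yc$ for some $y$, and $xk=yck=0$, giving the implication back. It remains to see that $ck=0$ is the same as $\mathrm{Ker}(c)\to\mathrm{Ker}(d_1)$ being an isomorphism. That canonical map exists because $d_1$ kills $d_0$ and so factors through $c$. It is always a monomorphism, since both objects are subobjects of $B_1$. The reverse factorization of $k$ through $\mathrm{ker}(c)$ exists exactly when $ck=0$.

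Hence the pullback and pushout conditions coincide, and together they amount to (c). Combining this with Step 1 proves the theorem.
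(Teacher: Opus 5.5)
Your proof is correct and follows the paper's own route. You unwind the definition of $\mathcal{C}_{\mathcal{T},\mathcal{F}}$ from Theorem 2.7(c), obtain (a) and (b) from the descriptions of $\mathring{\mathcal{T}}$ and $\mathring{\mathcal{F}}$, and reduce the pullback and pushout conditions to invertibility of the comparison maps $\mathrm{Coker}(d_0)\to\mathrm{Coker}(\mathrm{ker}(d_1))$ and $\mathrm{Ker}(\mathrm{coker}(d_0))\to\mathrm{Ker}(d_1)$.

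You also add detail the paper leaves implicit: why these particular limits in $\mathcal{C}$ can be computed componentwise, and the reduction of the ``trivially equivalent'' step to $\mathrm{coker}(d_0)\,\mathrm{ker}(d_1)=0$. One wording slip: in the level-2 face, the identity on $B_2$ is the $\eta_B$-component, which is the side opposite the comparison map, not a vertical side. That placement is exactly why the face is a pullback if and only if the comparison map is an isomorphism.
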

\begin{cor}\label{Corollary-abelian}
	If the category $\mathcal{A}$ is abelian, then an object $B$ in $\mathcal{C}$ belongs to $\mathcal{C}_{\mathcal{T},\mathcal{F}}$ if and only if it is exact at $B_1$. In particular, if $\mathcal{A}$ is the category of modules over a ring, then this simply means that $d_0(B_0)=\mathrm{Ker}(d_1)$.\qed
\end{cor}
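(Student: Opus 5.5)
We derive Corollary~\ref{Corollary-abelian} from Theorem~3.1 by checking its three conditions in the abelian case. In an abelian category every morphism factors as a normal epimorphism followed by a normal monomorphism, namely through its image. Since all epis and monos are normal, conditions (a) and (b) of Theorem~3.1 hold automatically for every $B$ in $\mathcal{C}$. So everything reduces to condition (c).

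We use the second form of (c): the kernels of $d_1$ and of $\mathrm{coker}(d_0)$ coincide. Precisely, the canonical morphism $\mathrm{Ker}(\mathrm{coker}(d_0))\to\mathrm{Ker}(d_1)$ must be an isomorphism.

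In an abelian category, $\mathrm{ker}(\mathrm{coker}(d_0))$ is the image $\mathrm{im}(d_0)\to B_1$. Since $d_1d_0=0$, this image factors through $\mathrm{ker}(d_1)$ via a monomorphism, and this is exactly the canonical comparison above. Hence condition (c) says that $\mathrm{Im}(d_0)\to\mathrm{Ker}(d_1)$ is an isomorphism, which is the standard definition of exactness at $B_1$.

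For the converse, exactness at $B_1$ means the image of $d_0$ and the kernel of $d_1$ agree as subobjects of $B_1$. Then the comparison map is an isomorphism, so (c) holds. Together with the automatic (a) and (b), Theorem~3.1 gives $B\in\mathcal{C}_{\mathcal{T},\mathcal{F}}$.

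In the module case the image of $d_0$ is the submodule $d_0(B_0)$ and the kernel is the usual $\mathrm{Ker}(d_1)$. The condition therefore reads $d_0(B_0)=\mathrm{Ker}(d_1)$.

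The only point needing any care is identifying $\mathrm{ker}(\mathrm{coker}(d_0))$ with the image and the canonical map with the image inclusion. This is standard abelian category theory, so no real obstacle is expected.
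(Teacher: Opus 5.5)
Your proposal is correct and follows the paper's intended argument, which is left implicit behind the \qed but is spelled out in Example 4.5(a). In an abelian category, conditions (a) and (b) of Theorem 3.1 hold automatically, and condition (c) reduces to the canonical map from the image $\mathrm{Ker}(\mathrm{coker}(d_0))$ to $\mathrm{Ker}(d_1)$ being an isomorphism, which is exactly exactness at $B_1$.
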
  

\section{More on exactness in a non-abelian category}

We would like to rephrase Theorem 3.1 by saying that an object $B$ in $\mathcal{C}$ belongs to $\mathcal{C}_{\mathcal{T},\mathcal{F}}$ if and only if the sequence $B=$ $$\xymatrix{B_0\ar[r]^-{d_0}&B_1\ar[r]^-{d_1}&B_2}$$ is exact at $B_1$ in any case, not just in the abelian case. That is, we are introducing the following definition of exactness:
\begin{defi}
	\emph{A sequence $\ldots X\xrightarrow{u}Y\xrightarrow{v}Z\ldots$ of morphisms in a pointed category with kernels and cokernels with $vu=0$ is said to be} exact at $Y$ \emph{if the following conditions hold:}
	\begin{itemize}
		\item [(a)] \emph{$u$ admits an (epi, normal mono)-factorization;}
		\item [(b)] \emph{$v$ admits a (normal epi, mono)-factorization;}
		\item [(c)] \emph{the morphisms $u$ and $\mathrm{ker}(v)$ have the same cokernels, or, equivalently, the morphisms $v$ and $\mathrm{coker}(u)$ have the same kernels.}
	\end{itemize}		 
\end{defi}
The purpose of this section is only to add some clarifying observations to Definition 4.1.
\begin{prop}
	Let $E=(\ldots X\xrightarrow{u}Y\xrightarrow{v}Z\ldots)$ be a sequence of morphisms in a pointed category with kernels and cokernels, with $vu=0$, and $f:X\to\mathrm{Ker}(v)$ be the morphism determined by the universal property of the kernel of $v$. Then:
	\begin{itemize}
		\item [(a)] if $f$ is an epimorphism then conditions $(a)$ and $(c)$ of Definition $4.1$ hold;
		\item [(b)] if $u$ admits an (epi, normal mono)-factorization and $E$ is exact at $Y$, then $f$ is an epimorphism.
	\end{itemize}
\end{prop}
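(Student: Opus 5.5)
Write $k=\mathrm{ker}(v):\mathrm{Ker}(v)\to Y$, so that $u=kf$. Both parts reduce to elementary manipulations with this factorization, using only that $k$ is a normal monomorphism and that cokernels are defined by a universal property that only sees which morphisms are annihilated.

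For (a), assume $f$ is an epimorphism. Then $u=kf$ is itself an (epi, normal mono)-factorization, since $k$ is a kernel and hence a normal monomorphism; this gives condition (a) of Definition 4.1. For condition (c) it suffices to check that $u$ and $k$ annihilate the same morphisms out of $Y$; then they have the same cokernels. Let $g:Y\to W$. If $gk=0$ then $gu=gkf=0$. Conversely, if $gu=0$ then $(gk)f=0=0\cdot f$, and since $f$ is an epimorphism, $gk=0$. So the two classes of morphisms coincide, $\mathrm{coker}(u)$ and $\mathrm{coker}(k)$ coincide, and (c) holds.

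For (b), assume $u=me$ with $e:X\to M$ an epimorphism and $m:M\to Y$ a normal monomorphism, and assume that $E$ is exact at $Y$; in fact only condition (c) of Definition 4.1 will be used. Since $m$ is a normal monomorphism, $m=\mathrm{ker}(\mathrm{coker}(m))$. Moreover $\mathrm{coker}(m)=\mathrm{coker}(u)$, because $e$ is an epimorphism: $gme=0$ if and only if $gm=0$, exactly as in (a). By (c) we also have $\mathrm{coker}(u)=\mathrm{coker}(k)$. Hence $m=\mathrm{ker}(\mathrm{coker}(k))$. Now $k$ is a normal monomorphism as well, so $k=\mathrm{ker}(\mathrm{coker}(k))$. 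Therefore $m$ and $k$ are kernels of the same morphism, and there is an isomorphism $\theta:M\to\mathrm{Ker}(v)$ with $k\theta=m$.

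It remains to identify $f$. We have $k\theta e=me=u=kf$, and $k$ is a monomorphism, so $f=\theta e$. This is the composite of an isomorphism and an epimorphism, hence an epimorphism.

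The only delicate point is in (b): one has to use that both $m$ and $k$ are normal monomorphisms and so are recovered as kernels of their cokernels. Condition (c) then makes these two kernels kernels of the same morphism. Everything else is routine cancellation.
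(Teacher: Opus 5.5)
Your proof is correct and takes essentially the same approach as the paper: in (a) you cancel the epimorphism $f$ to see that $u$ and $\mathrm{ker}(v)$ annihilate the same morphisms, and you also make explicit that $u=\mathrm{ker}(v)f$ is the required (epi, normal mono)-factorization, which the paper leaves implicit. In (b) your isomorphism $\theta$ is precisely the paper's comparison morphism $g\colon S\to\mathrm{Ker}(v)$, which is invertible because $m$ and $\mathrm{ker}(v)$ are normal monomorphisms with the same cokernel; $f=\theta e$ then follows by cancelling the monomorphism $\mathrm{ker}(v)$.
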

\begin{proof}
	(a): Since $u=\mathrm{ker}(v)f$ and $f$ is an epimorphism, $u$ and $\mathrm{ker}(v)$ have the same cokernels.
	
	(b): Consider the commutative diagram $$\xymatrix{X\ar@/^-0.5cm/[ddr]_e\ar[dr]|f\ar[rr]^-u&&Y\ar[dr]|{\mathrm{coker}(u)}\ar[rr]^-v&&Z\\&\mathrm{Ker}(v)\ar[ur]|{\mathrm{ker}(v)}&&\mathrm{Coker}(u)\\&S\ar@/^-0.5cm/[uur]_m\ar[u]|g}$$ in which $u=me$ is the (epi, normal mono)-factorization of $u$ and $g$ (as well as $f$) is determined by the universal property of the kernel of $v$. Since $e$ is an epimorphism, $u$ and $m$ have the same cokernels. Since $E$ is exact at $Y$, it follows that $\mathrm{ker}(v)$ and $m$ have the same cokernels. Since $\mathrm{ker}(v)$ and $m$ are normal monomorphisms with the same cokernels, $g$ is an isomorphism. Since $f=ge$, it follows that $f$ is an epimorphism.
\end{proof}
Dually, we obtain:
\begin{prop}
	Let $E=(\ldots X\xrightarrow{u}Y\xrightarrow{v}Z\ldots)$ be a sequence of morphisms in a pointed category with kernels and cokernels, with $vu=0$, and $h:\mathrm{Coker}(u)\to Z$ be the morphism determined by the universal property of the cokernel of $u$. Then:
	\begin{itemize}
		\item [(a)] if $h$ is a monomorphism then conditions $(b)$ and $(c)$ of Definition $4.1$ hold;
		\item [(b)] if $v$ admits a (normal epi, mono)-factorization and $E$ is exact at $Y$, then $h$ is a monomorphism.
	\end{itemize}
\end{prop}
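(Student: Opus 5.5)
The statement is the formal dual of Proposition 4.2, so the plan is to dualize that proof step by step. Throughout, write $h:\mathrm{Coker}(u)\to Z$ for the unique morphism with $h\,\mathrm{coker}(u)=v$, which exists because $vu=0$.

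For (a), assume $h$ is a monomorphism. Condition (b) of Definition 4.1 then holds at once. Indeed, $v=h\,\mathrm{coker}(u)$ factors as a cokernel, hence a normal epimorphism, followed by a monomorphism. For condition (c), we use the second formulation, namely that $v$ and $\mathrm{coker}(u)$ have the same kernels. A morphism $x$ satisfies $vx=0$ if and only if $h\,\mathrm{coker}(u)x=0$. Since $h$ is a monomorphism, this holds if and only if $\mathrm{coker}(u)x=0$. So the two morphisms have the same kernels.

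For (b), let $v=me$ be a (normal epi, mono)-factorization through an object $Q$, with $e:Y\to Q$ a normal epimorphism and $m:Q\to Z$ a monomorphism. First, $eu=0$ because $m\,eu=vu=0$ and $m$ is monic. Hence there is a unique $g:\mathrm{Coker}(u)\to Q$ with $g\,\mathrm{coker}(u)=e$. Then $mg\,\mathrm{coker}(u)=v=h\,\mathrm{coker}(u)$, and cancelling the epimorphism $\mathrm{coker}(u)$ gives $h=mg$. Since $m$ is a monomorphism, it remains only to show that $g$ is an isomorphism.

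Next, because $m$ is monic, $v$ and $e$ have the same kernels. Exactness at $Y$ (Definition 4.1(c)) says that $v$ and $\mathrm{coker}(u)$ have the same kernels. Therefore $e$ and $\mathrm{coker}(u)$ are two normal epimorphisms out of $Y$ with the same kernels, that is, with a common kernel $k$.

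The main step is to conclude from this that $g$ is an isomorphism. A normal epimorphism is the cokernel of its own kernel, so both $e$ and $\mathrm{coker}(u)$ are cokernels of the same morphism $k$. They are therefore isomorphic as quotients of $Y$, via the unique comparison morphism. That comparison morphism must be $g$, since $g\,\mathrm{coker}(u)=e$ and $\mathrm{coker}(u)$ is an epimorphism. So $g$ is an isomorphism, and $h=mg$ is a monomorphism.
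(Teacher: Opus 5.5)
Your proof is correct and is exactly the dual of the paper's proof of Proposition 4.2, which is how the paper obtains this statement. In (b) you write $h=mg$ using the (normal epi, mono)-factorization $v=me$, and conclude that $g$ is an isomorphism because $e$ and $\mathrm{coker}(u)$ are normal epimorphisms with the same kernel. Along the way you also fill in details the paper leaves implicit: that $eu=0$ because $m$ is monic, and that Definition 4.1(b) holds in part (a).
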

\begin{cor}
	A sequence $\ldots X\xrightarrow{u}Y\xrightarrow{v}Z\ldots$ of morphisms in a pointed category with kernels and cokernels with $vu=0$ is exact at $Y$ if and only if the following conditions hold:
	\begin{itemize}
		\item [(a)] $u$ admits an (epi, normal mono)-factorization;
		\item [(b)] $v$ admits a (normal epi, mono)-factorization;
		\item [(c)] the canonical morphism $X\to\mathrm{Ker}(v)$ is an epimorphism, or, equivalently, the canonical morphism $\mathrm{Coker}(u)\to Z$ is a monomorphism. 
	\end{itemize}	
\end{cor}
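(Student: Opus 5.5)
We derive the corollary from Propositions 4.2 and 4.3, together with Definition 4.1. The plan is to show that, under conditions (a) and (b) (which appear in both the definition and the corollary), condition 4.1(c) is equivalent to each of the two formulations in the corollary's condition (c). That settles both the equivalence with exactness and the internal ``or, equivalently'' of (c).

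\emph{Forward direction.} Suppose the sequence is exact at $Y$. Then (a) and (b) hold by Definition 4.1.
\begin{itemize}
\item Since $u$ admits an (epi, normal mono)-factorization and $E$ is exact at $Y$, Proposition 4.2(b) shows that the canonical morphism $f:X\to\mathrm{Ker}(v)$ is an epimorphism.
\item Since $v$ admits a (normal epi, mono)-factorization and $E$ is exact at $Y$, Proposition 4.3(b) shows that $h:\mathrm{Coker}(u)\to Z$ is a monomorphism.
\end{itemize}
So both versions of (c) in the corollary hold.

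\emph{Converse.} Suppose (a) and (b) hold, and one of the two versions of (c) holds.
\begin{itemize}
\item If $f$ is an epimorphism, Proposition 4.2(a) gives condition 4.1(c). Together with (a) and (b), this means the sequence is exact at $Y$.
\item If instead $h$ is a monomorphism, Proposition 4.3(a) gives 4.1(b) and 4.1(c). With (a), the sequence is again exact.
\end{itemize}

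\emph{Equivalence of the two versions of (c).} In the presence of (a) and (b), each version implies exactness by the converse, and exactness implies both versions by the forward direction. Hence the two versions are equivalent under (a) and (b), which is exactly the sense of ``or, equivalently'' in the statement.

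There is no real obstacle here; the only care needed is to check the hypotheses of Propositions 4.2(b) and 4.3(b). Each requires exactness at $Y$ together with the relevant factorization, and both are available in the forward direction.
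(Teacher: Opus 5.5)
Your proof is correct and matches the paper's intended derivation. The paper gives the corollary no separate proof and treats it as an immediate consequence of Propositions 4.2 and 4.3, combined exactly as you do: parts (b) for the forward direction and parts (a) for the converse. You are also right to read the ``or, equivalently'' in (c) as holding only in the presence of (a) and (b): in pointed sets, $X=\{*\}\to Y=\{*,a,b\}\to Z=\{*,c\}$ with $v(a)=v(b)=c$ has $X\to\mathrm{Ker}(v)$ epimorphic, yet $\mathrm{Coker}(u)\to Z$ is not monic.
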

\begin{example}
	\emph{Let $\mathcal{A}$ be a pointed category with kernels and cokernels. For a sequence $E=(\ldots X\xrightarrow{u}Y\xrightarrow{v}Z\ldots)$ in $\mathcal{A}$ with $vu=0$, we have:}
	\begin{itemize}
		\item [(a)] \emph{If the ambient category $\mathcal{A}$ is abelian, or, more generally, exact in the sense of \cite{[P1962]} (see also \cite{[G2012]}), then all monomorphisms and all epimorphisms in it are normal, and each morphism in it admits an (epi, mono)-factorization. In this case conditions 4.4(a) and 4.4(b) hold automatically, and we can omit them (cf. Corollary 3.2). More generally, this can also be done, of course, when we only know that every morphism in $\mathcal{A}$ admits both (epi, normal mono)- and (normal epi, mono)-factorizations. For instance, this is the case if both $\mathcal{A}$ and $\mathcal{A}^{\mathrm{op}}$ are} normal \emph{in the sense of \cite{[J2010]}, or, in particular,} homological \emph{in the sense of \cite{[BB2004]}. A further special case is the case of $\mathcal{A}$ being} semiabelian \emph{in the sense of \cite{[R1969]}, which includes, e.g. the categories of torsion free abelian groups, of Banach spaces, and several other well-known examples.}
		\item [(b)] \emph{When $\mathcal{A}$ is a pointed regular category with kernels and cokernels, if $E$ is exact in the sense of Definition 4.1.7 in \cite{[BB2004]}, then it satisfies all conditions of Corollary 4.4. The converse implication does not hold in general, even when $\mathcal{A}$ is semi-abelian in the sense of \cite{[JMT2002]}, unless every epimorphism in $\mathcal{A}$ is regular (as e.g. in the category of groups). However, an infinite sequence $$\ldots\to X_2\to X_1\to X_0$$ in any pointed regular category is exact in each $X_i$ $(i=1,2,\ldots)$ in the sense of our Definition 4.1 if and only if it is exact in the sense of Definition 4.1.7 in \cite{[BB2004]}}.
        \item [(c)] \emph{When $\mathcal{A}$ is the category of pointed sets (which is pointed regular), `classically' $E$ is said to be exact at $Y$ if $u(X)$ is equal to the inverse image of the distinguished point of $Z$ (see e.g. \cite{[G1955], [GZ1967], [S1997]}). This is equivalent to condition 4.4(c), and here 4.4(a) (but not 4.4(b)) holds automatically. On the other hand, here every epimorphism is regular, and our Definition 4.1 is equivalent to Definition 4.1.7 in \cite{[BB2004]}}.  
	\end{itemize}
\end{example}

\section{Expected further developments}

\textbf{5.1.} We expect various counterparts of constructions of Section 3 and Theorem 3.1 for other notions of exactness. It could be exactness with respect to an ideal of morphisms, introduced by C. Ehresmann in \cite{[Ehr1964]} and mainly developed in the work of M. Grandis, which is also used in \cite{[GJ2020]}. Or, one could replace composable pairs considered in Section 3 with \textit{forks} $$\xymatrix{A_0\ar@<0.5ex>[r]^-{d_0^A}\ar@<-0.5ex>[r]_-{d_0'^A}&A_1\ar[r]^-{d_1^A}&A_2,}$$ with $d_1^A d_0^A=d_1^A d_0'^A$ instead of $d_1^A d_0^A=0$. Or, one could use the more general context of regular multi-pointed categories introduced in \cite{[GJU2012]}.

\textbf{5.2.} In many known examples of non-pointed torsion theories one could search for the situation where $\mathcal{C}_{\mathcal{T},\mathcal{F}}$ turns out to be the ambient category that is inside a category $\mathcal{C}$, which appears more naturally (but gives merely a near-torsion theory).

\textbf{5.3.} The diagrams in 2.4(c) and in Lemma 4.10 of \cite{[BG2006]} are identical, although they appear in different contexts; in fact in the context of \cite{[BG2006]}, the diagrams in 2.3(c) and in 2.4(c) also coincide. Having in mind also Proposition 2.3 in \cite{[GJM2013]}, this requires a further study.

\textbf{5.4.} The precise relationship with category-theoretic notions of radicals introduced in \cite{[GJM2013]} and \cite{[GJM2017]} will hopefully be established.
 
	{}
	
\end{document}